\def\BState{\State\hskip-\ALG@thistlm}
\title{Semi-Lagrangian one-step methods for two classes of time-dependent partial differential systems \vspace{-2ex}}
\author{}
\date{}
\newtheorem{theorem}{Theorem}
\newtheorem{definition}{Definition}
\newtheorem{lemma}{Lemma}
\begin{document}
\maketitle

\centerline{\scshape Nikolai D. Lipscomb and Daniel X. Guo}
\medskip
{\footnotesize
\centerline{Department of Mathematics and Statistics}
\centerline{University of North Carolina Wilmington}
\centerline{Wilmington, North Carolina, USA}
}

\abstract{Semi-Lagrangian methods are numerical methods designed to find approximate solutions to particular time-dependent partial differential equations (PDEs) that describe the advection process. We propose semi-Lagrangian one-step methods for numerically solving initial value problems for two general systems of partial differential equations. Along the characteristic lines of the PDEs, we use ordinary differential equation (ODE) numerical methods to solve the PDEs. The main benefit of our methods is the efficient achievement of high order local truncation error through the use of Runge-Kutta methods along the characteristics. In addition, we investigate the numerical analysis of semi-Lagrangian methods applied to systems of PDEs: stability, convergence, and maximum error bounds.}

\vspace{5mm}
\noindent 2010 \emph{Mathematics Subject Classification}. Primary: 65M25; Secondary: 35Q35, 65M12.

\section{Introduction}

Time-dependent partial differential equations are at the core of particle physics. Due to the difficult and often analytically unsolvable nature of most of these equations, the next best approach is to use an algorithm to approximate the solution. Research on numerical computation for approximating solutions to advection equations goes back to the 1950s with finite difference method approaches to nonlinear hyperbolic partial differential equations by Courant et al. \cite{rcourant} and numerical integration of the barotropic vorticity equation by Fj{\o}rtoft \cite{rfjortoft}. In the fields of weather forecasting and climate modelling, particle trajectory methods were proposed by Wiin-Nielsen \cite{wiin-nielsen} which led to the only reliable forecasting model of its time. Meanwhile, researchers in plasma physics saw the promise of semi-Lagrangian approaches; for example, Cheng and Knorr \cite{chengknorr} produced an efficient numerical splitting scheme for solving the Vlasov-Maxwell equations. While there was much research on advection processes and weather prediction over the following decades, the 1980s produced a slew of research that brought characteristic-based methods into different numerical approaches: Douglas Jr. and Russell \cite{jdouglas} brought the method of characteristics to finite difference and finite element methods, Andr{\'e} Robert's meterological contributions produced stable numerical solutions to the shallow-water equations \cite{robert}, and many more. Today, semi-Lagrangian models are frequently used by organisations that focus on atmospheric modelling such as the European Centre for Medium-Range Weather Forecasts (ECMWF) \cite{diamantakis}, the National Oceanic and Atmospheric Administration (NOAA), the National Center for Atmospheric Research \cite{lauritzen}, and the High Resolution Local Area Modelling (HIRLAM) programme.

The popularity of semi-Lagrangian methods in today's atmospheric models lies in the resolution problem. Eulerian schemes' accuracy is dependent on the resolution of the solution grid, a function of the problem domain's discretisation--specifically the temporal and spatial discretisation. For earlier Eulerian schemes, resolution was greatly dependent on stability, which demanded a very small time discretisation relative to the spatial discretisation \cite{staniforthcote}. The gradual development of methods to avoid such constraints brought attention to semi-Lagrangian schemes: a pairing of the equal spacing of solutions from an Eulerian approach and the particle-tracing of a Lagrangian approach. Modern semi-Lagrangian numerical schemes perform with great numerical stability under a wide range of resolutions and produce little numerical dispersion \cite{diamantakis}. In order to maintain competitiveness in the near future, semi-Lagrangian-dependent numerical schemes must continue to improve: reduction of error while maintaining sufficiently fast computation time considers not just the resolution of the Eulerian grid, but the order of the error. Most numerical schemes in weather applications achieve second order results with respect to the spatial and time discretisations. Further, while developed from physical laws, a complete numerical analysis of semi-Lagrangian theory is still underway.

We will examine numerical methods for solving initial value problems involving systems of time-dependent partial differential equations (PDEs). The particular types of PDEs we will examine are PDEs that describe the advection process, mostly found in fluid dynamics and atmospheric modelling. We will consider two general cases for systems of time-dependent PDEs.
The first system, \emph{general advection in one dimension}, is
\begin{equation} \left\{
\begin{array}{cccc}
\dfrac{\partial y_1}{\partial t} + \omega \dfrac{\partial y_1}{\partial x} & = & f_1(t,x,y_1,\dots,y_n), & \ \ y_{1_0}=y_1(0,x), \\
\vdots & \vdots & \vdots & \vdots \\
\dfrac{\partial y_n}{\partial t} + \omega \dfrac{\partial y_n}{\partial x} & = & f_n(t,x,y_1,\dots,y_n), & \ \ y_{n_0}=y_n(0,x),
\end{array} \right.
\label{mainsystem1}
\end{equation}
where $y_i = y_i(t,x)$, $(t,x) \in [0,\infty) \times [a,b]$ and $\omega$ belongs to one of three cases:
\begin{enumerate}
\item $\omega \in \mathbb{R}$,
\item $\omega = \omega(t, x)$, $| \omega | < \infty$,
\item $\omega = \omega(t, x, \boldsymbol{y})$, $| \omega | < \infty$.
\end{enumerate}

The second system, \emph{nonlinear advection in two dimensions}, is of the form
\begin{equation} \left\{
\begin{array}{cccc}
\dfrac{\partial u}{\partial t} + u \dfrac{\partial u}{\partial x} + v \dfrac{\partial u}{\partial y} & = & f(t,x,y,u,v), & \ \ u_0=u(0,x,y), \\
&&& \\
\dfrac{\partial v}{\partial t} + u \dfrac{\partial v}{\partial x} + v \dfrac{\partial v}{\partial y} & = & g(t,x,y,u,v), & \ \ v_0=u(0,x,y), \\
\end{array} \right.
\label{mainsystem2}
\end{equation}
where $u = u(t,x,y)$, $v = v(t,x,y)$, $(t,x,y) \in [0,\infty) \times [a,b] \times [c,d]$.

An Eulerian scheme examines a prescribed set of points and examines how the solution to the PDEs change at these points as time goes on. Since these time-dependent PDEs model physical processes, such as the movement of particles, a Lagrangian approach would examine individual particles (or parcels) with respect to the solution and trace their trajectory, examining how the solution updates at the new arrival points as time goes on. A semi-Lagrangian method is a marriage of the two concepts: we preserve an Eulerian framework by constructing a grid that keeps the analysis of the solution spread evenly throughout a region of interest; however, we also examine the parcels that pass through these grid points, tracing their trajectory and using that information to update the solution at later times. By considering the system of ordinary differential equations (ODEs) along the characteristic lines of the PDE, we can take advantage of numerical methods that solve ODEs \cite{guo1, guo2}.

\begin{figure}
\centering
%\begin{tikzpicture}[scale=0.7]
%\draw (-6.5,0) node[left]{$t_k$}--(6.5,0);
%\draw (-6.5,6) node[left]{$t_{k+1}$}--(6.5,6);
%\draw (-6,-0.5)--(-6,6.5);
%\draw (0,-0.5)--(0,6.5);
%\draw (6,-0.5)--(6,6.5);
%\draw[->] (-7.5,-1.5)--(-7.5,1.5) node[above] {$t$};
%\draw[->] (-7.5,-1.5)--(-4.5,-1.5) node[right] {$x$};
%\draw[fill] (0,0) circle (.1) node[below right]{$x_{A_i}^k$};
%\draw[fill] (-6,0) circle (.1) node[below right]{$x_{A_{i-1}}^k$};
%\draw[fill] (6,0) circle (.1) node[below right]{$x_{A_{i+1}}^k$};
%\draw[fill] (0,6) circle (.1) node[below right]{$x_{A_i}^{k+1}$};
%\draw[fill] (-6,6) circle (.1) node[below right]{$x_{A_{i-1}}^{k+1}$};
%\draw[fill] (6,6) circle (.1) node[below right]{$x_{A_{i+1}}^{k+1}$};
%\draw [dashed] (-3,0)--(0,6);
%\draw[fill] (-3,0) circle (.1) node[below right]{$x_{D_i}^k$};
%\draw [dashed] (2,0)--(6,6);
%\draw[fill] (2,0) circle (.1) node[below right]{$x_{D_{i+1}}^k$};
%\end{tikzpicture}
\includegraphics[width=125mm]{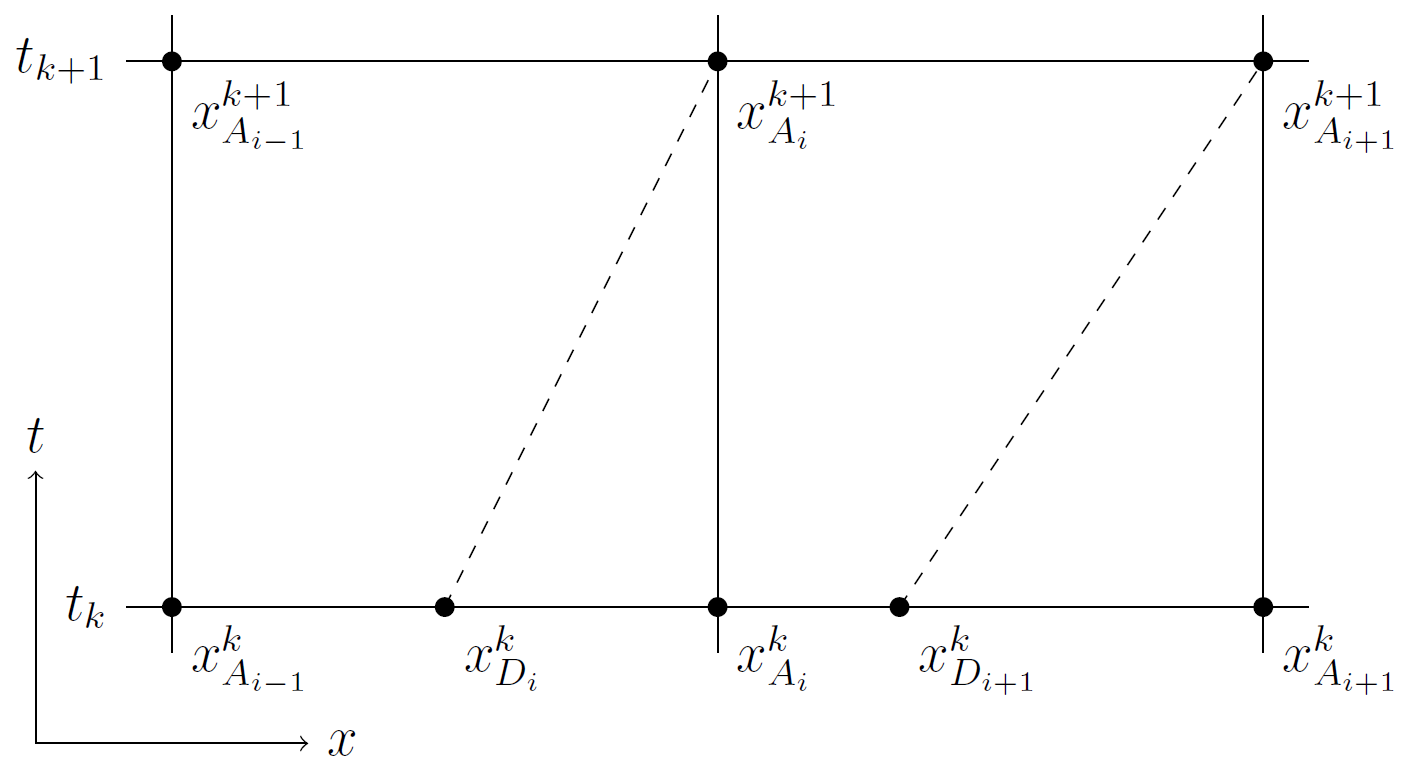}
\caption{A backward trace of trajectories from arrival points.}
\label{backwardpic}
\end{figure}

Figure \ref{backwardpic} demonstrates the concept in one spatial dimension: the $t,x$-plane. Each $x_A^{k+1}$ refers to the grid point a particle will arrive on at time $t_{k+1}$. Each $x_D^k$ refers to the departure point for the particle at time $t_k$. Departure points and arrival points are paired as they represent the position of the same particle at two different times. We note that for a backward trace, the calculated departure point may not necessarily be on a grid intersection.

Early semi-Lagrangian theory began with the development around a finite difference framework \cite{staniforthcote}. To understand the elementary theory, consider the following advection equation in 1D,
\begin{equation}
\frac{dF}{dt} = \frac{\partial F}{\partial t} + \frac{dx}{dt} \frac{\partial F}{\partial x} = 0,
\label{simple1D}
\end{equation}
where $$\frac{dx}{dt} = U(t,x)$$
determines our characteristic lines.
By using a central difference approximation for $F$ about the point $(t_k, x_A - \Delta x)$ along a characteristic line and plugging into Equation (\ref{simple1D}), we have the finite difference approach,
\begin{equation}
\frac{F(t_k + \Delta t, x_A) - F(t_k - \Delta t, x_A - 2 \Delta x)}{2 \Delta t} = 0.
\label{simple1Ddiff}
\end{equation}
Point $(t_k + \Delta t, x_A)$ is a regular point on the Eulerian grid and $t_k - \Delta t = t_{k-1}$. However, due to varying values of $\Delta x$, we rarely find $(t_k - \Delta t, x_A - 2 \Delta x)$ to be a grid point. This requires us to use interpolation to determine values of $F$ at such a point.
By using the above characteristic equation, we can approximate $\Delta x$ from the implicit formula,
$$\Delta x = U(t_k, x_A - \Delta x)\Delta t,$$
which may be solved via iteration.

We recall that most of the present semi-Lagrangian-based methods take advantage of second order error with respect to the temporal discretisation, $\Delta t = t_{k+1} - t_k$. Our proposed algorithms achieve not just first and second order error with respect to the temporal discretisation, but also third and fourth order error for two general classes of time-dependent partial differential systems.

The rest of this paper is organised into five more sections. Section 2 will deal with the construction and implementation of our semi-Lagrangian algorithms in the context of the 1D initial value problem from System (\ref{mainsystem1}). Section 3 will consider the stability and convergence of the methods from section 2. Section 4 will explain how these methods can be developed for higher dimensional initial value problems, such as System (\ref{mainsystem2}). Section 5 consists of numerical results for two test cases, one for System (\ref{mainsystem1}) and another for System (\ref{mainsystem2}). Section 6 contains our conclusions and considerations for future research.

\section{Semi-Lagrangian Methods}
Recall the general advection in one dimension problem from System (\ref{mainsystem1}). Many semi-Lagrangian methods are developed in \cite{guo1, guo2} to numerically solve a single PDE. We seek to adapt these methods to solve the system of interest.
Since $\omega = \frac{dx}{dt}$, we have
\begin{equation*}
\begin{array}{rll}
\int_{x_D^k}^{x_A^{k+1}} dx & = & \int_{t_k}^{t_{k+1}}\omega dt \vspace{2mm} \\
x_D^k & = & x_A^{k+1} - \int_{t_k}^{t_{k+1}}\omega dt \vspace{2mm} \\
& \approx & x_A^{k+1} - \omega \tau,
\end{array}
\end{equation*}
where $\tau = t_{k+1} - t_k = \frac{T}{N}$. Also, let $h = \frac{b-a}{M}$.  \vspace{2mm}

Let $\boldsymbol{y} = \boldsymbol{(} y_1, \dots, y_n \boldsymbol{)}^{\intercal}$ and $\boldsymbol{f} = \boldsymbol{(} f_1, \dots, f_n \boldsymbol{)}^{\intercal}$, then the reformulation of PDE System (\ref{mainsystem1}) is
\begin{equation*}
\boldsymbol{y}_t + \omega \boldsymbol{y}_x = \boldsymbol{f}(t, x, \boldsymbol{y}), \text{\hspace{3mm}}
0 \leq t \leq T, \text{\hspace{3mm}} a \leq x \leq b, \text{\hspace{3mm}} \|\boldsymbol{y}\|_{\infty} < \infty.
\end{equation*}
Since $\omega = \dfrac{dx}{dt}$, we also have, from the chain rule, $\dfrac{d}{dt} = \dfrac{\partial}{\partial t} + \omega \dfrac{\partial}{\partial x}$. This allows us to write System (\ref{mainsystem1}) in the compact form
\begin{equation}
\dfrac{d \boldsymbol{y}}{dt} = \boldsymbol{f}(t, x, \boldsymbol{y}), \text{\hspace{3mm}} \boldsymbol{y}(0, x)=\boldsymbol{y}_0(x), \text{\hspace{3mm}} (t,x) \in [0, T]\times[a, b], \text{\hspace{3mm}} \| \boldsymbol{y} \|_{\infty} < \infty.
\label{systemomega}
\end{equation}
Integrating from $t_k$ to $t_{k+1}$, we get
$$ \boldsymbol{y}(t_{k+1}, x_{A}^{k+1}) = \boldsymbol{y}(t_k, x_D^k) + \int_{t_k}^{t_{k+1}}\boldsymbol{f}(t, x, \boldsymbol{y}) dt.$$

Let $\boldsymbol{y}_A^{k+1}$ be the numerical approximation of $\boldsymbol{y}(t_{k+1}, x_A^{k+1})$ and $\boldsymbol{y}_D^k$ be the numerical approximation of $\boldsymbol{y}(t_k, x_D^k)$. We can now construct the semi-Lagrangian Euler method for finding $\boldsymbol{y}_A^{k+1}$ at all grid points $x_A^{k+1}$ given the values for $\boldsymbol{y}_A^k$ at all grid points $x_A^k$ as seen in Algorithm \ref{SLEM}.
%\begin{equation}\text{SLEM}:
%\left\{ \begin{array}{lr}\vspace{2mm}
%         \boldsymbol{y}_A^0=\boldsymbol{y}_0(x_A^0), & x_A^0=x_A^k=x_i, \ \ i=0, \dots,M,\\ \vspace{2mm}
%         x_D^k = x_A^{k+1} - \omega_D^k \tau, & \\
%        \boldsymbol{y}_A^{k+1}=\boldsymbol{y}_D^k+\tau\boldsymbol{f}(t_k, x_D^k, \boldsymbol{y}_D^k), & k=0, \dots, N-1. \end{array} \right.
%\label{SLEM}
%\end{equation}
\begin{algorithm}[H]
\caption{Semi-Lagrangian Euler Method}\label{SLEM}
\begin{algorithmic}[1]
\Procedure{SLEM: find all $\boldsymbol{y}_A^{k+1}$ given all $\boldsymbol{y}_A^k$}{}
\State set initial guesses for all $x_D^k$
\ForAll{$x_A^{k+1}$} \do

\For{$i=1$ to $n$} \Comment{Iterate to find $x_D^k$}
\State interpolate using $\boldsymbol{y}_A^k, x_A^k$ to find $\boldsymbol{y}_D^k$ \Comment{At least 1st order interpolation}
\State $\omega_D^k = \omega(t_k, x_D^k, \boldsymbol{y}_D^k)$
\State $x_D^k = x_A^{k+1} - \omega_D^k$
\EndFor
\State interpolate using $\boldsymbol{y}_A^k, x_A^k$ to find $\boldsymbol{y}_D^k$ \Comment{At least 1st order interpolation}
\State $\boldsymbol{y}_A^{k+1} = \boldsymbol{y}_D^k + \tau \boldsymbol{f}(t_k, x_D^k, \boldsymbol{y}_D^k)$
\EndFor
\State \textbf{return} all $\boldsymbol{y}_A^{k+1}$
\EndProcedure
\end{algorithmic}
\end{algorithm}

Lipscomb \cite{nlipscomb} shows that the semi-Lagrangian Euler method in Algorithm \ref{SLEM} has a local truncation error of $O(\tau)$. The integer $n$ is the number of implicit calculations used to approximate each departure point. While $n$ can vary, Williamson \& Olson \cite{williamsonolson} and Simmons \cite{simmons} have demonstrated in most cases that just a few iterations provide rapid convergence to the correct departure point, a common choice being $n=5$.

We will also consider the departure point calculations by case of $\omega$.
\begin{enumerate}
\item If $\omega \in \mathbb{R}$, then the calculation of the departure requires neither iteration nor interpolation.
\item If $\omega = \omega(t,x)$, then the calculation of the departure points is defined implicitly. We employ an iterative method; however, interpolation is not required to determine the departure points.
\item If $\omega = \omega (t,x,\boldsymbol{y})$, then the calculation of the departure points is again defined implicitly. However, $\omega_D^k$ is dependent on unknown values of $\boldsymbol{y}_D^k$ since the numerical solution is only known at the arrival (grid) points, $\boldsymbol{y}_A^k$. Therefore, an interpolation scheme is employed. Linear interpolation is sufficient for the semi-Lagrangian Euler method.
\end{enumerate}

The modified semi-Lagrangian approach is developed from a Runge-Kutta Order-2 method for solving systems of ODEs, much like the semi-Lagrangian Euler method was based on Euler's method for solving systems of ODEs. The development of Runge-Kutta methods can be found in many numerical analysis texts such as Burden and Faires \cite{burdenfaires}. The modified semi-Lagrangian Euler method is shown in Algorithm \ref{MSLEM}.
%\begin{equation}\text{MSLEM}:
%\left\{ \begin{array}{lr} \vspace{2mm}
%         \boldsymbol{y}_A^0=\boldsymbol{y}_0(x_A^0), & x_A^0=x_A^k=x_i, \ \ i=0, \dots,M,\\ \vspace{2mm}
%         x_D^k = x_A^{k+1} - \frac{\tau}{2} ( \omega_D^k + \omega_A^{k+1} ), &  \\ \vspace{2mm}
%         \boldsymbol{k}_1 = \boldsymbol{f}(t_k, x_D^k, \boldsymbol{y}_D^k), & \\ \vspace{2mm}
%         \boldsymbol{k}_2 = \boldsymbol{f}(t_{k+1}, x_A^{k+1}, \boldsymbol{y}_D^k + \tau \boldsymbol{k}_1), & \\ \vspace{2mm}
%        \boldsymbol{y}_A^{k+1}=\boldsymbol{y}_D^k+\frac{\tau}{2}(\boldsymbol{k}_1 + \boldsymbol{k}_2), & k=0, \dots, N-1. \end{array} \right.
%\label{MSLEM}
%\end{equation}
\begin{algorithm}[H]
\caption{Modified Semi-Lagrangian Euler Method}\label{MSLEM}
\begin{algorithmic}[1]
\Procedure{MSLEM: find all $\boldsymbol{y}_A^{k+1}$ given all $\boldsymbol{y}_A^k$}{}
\State set initial guesses for all $x_D^k$
\ForAll{$x_A^{k+1}$} \do

\For{$i=1$ to $n$} \Comment{Iterate to find $x_D^k$}
\State interpolate using $\boldsymbol{y}_A^k, x_A^k$ to find $\boldsymbol{y}_D^k$ \Comment{At least 2nd order interpolation}
\State $\omega_D^k = \omega(t_k, x_D^k, \boldsymbol{y}_D^k)$
\State $\boldsymbol{k}_1 = \boldsymbol{f}(t_k, x_D^k, \boldsymbol{y}_D^k)$
\State $\boldsymbol{k}_2 = \boldsymbol{f}(t_{k+1}, x_A^{k+1}, \boldsymbol{y}_D^k + \tau \boldsymbol{k}_1)$
\State $\boldsymbol{y}_A^{k+1} = \boldsymbol{y}_D^k + \frac{\tau}{2} (\boldsymbol{k}_1 + \boldsymbol{k}_2)$
\If{$i \leq n-1$}
\State $\omega_A^{k+1} = \omega (t_{k+1}, x_A^{k+1}, \boldsymbol{y}_A^{k+1})$
\State $x_D^k = x_A^{k+1} - \frac{\tau}{2} (\omega_D^k + \omega_A^{k+1})$
\EndIf
\EndFor
\EndFor
\State \textbf{return} all $\boldsymbol{y}_A^{k+1}$
\EndProcedure
\end{algorithmic}
\end{algorithm}

We notice that the algorithm utilises a higher order recalculation of the departure points. In the first and second order methods, we used an Euler approximation to numerically integrate along the characteristic lines. Algorithm (\ref{MSLEM}), on the other hand, uses a trapezoid rule approximation:
$$\int_{t_k}^{t_{k+1}} \omega dt \approx \dfrac{\tau}{2} (\omega_D^{k} + \omega_A^{k+1}),$$
where, under case 3 assumptions for $\omega$, $$\omega_D^k=\omega(t_k,x_D^k,\boldsymbol{y}_D^k)$$ and $$\omega_A^{k+1} = \omega(t_{k+1},x_A^{k+1},\boldsymbol{y}_A^{k+1}).$$ In order to determine $\omega_A^{k+1}$, we may need to evaluate $\boldsymbol{y}_A^{k+1}$ at all the arrival points. This is dependent on which case of $\omega$ we have to deal with. We will consider what elements of the algorithm can be omitted by case for $\omega$:
\begin{enumerate}
\item If $\omega \in \mathbb{R}$, then the calculation of the departure points reduces to a simple subtraction formula. Neither iteration nor interpolation is required for computing $x_D^k$.
\item If $\omega = \omega(t,x)$, then $\omega_A^{k+1}$ is not dependent on $\boldsymbol{y}_A^{k+1}$. Only implicit iteration will be required to compute $x_D^k$.
\item If $\omega = \omega(t,x,\boldsymbol{y})$, then $\omega_A^{k+1}$ is dependent on $\boldsymbol{y}_A^{k+1}$. Both implicit iteration and interpolation are required for calculating departure points. Second-order or higher interpolation is used for the modified semi-Lagrangian Euler method.
\end{enumerate}
The modified semi-Lagrangian Euler method has a local truncation error of $O(\tau^2)$ \cite{nlipscomb}.

Much like the modified semi-Lagrangian Euler method, we can derive third and fourth order Runge-Kutta methods for semi-Lagrangian schemes. The semi-Lagrangian Runge-Kutta method of order-3 is shown in Algorithm \ref{SLRKO3}.
%\begin{equation}\text{SLRKO3}:
%\left\{ \begin{array}{l} \vspace{2mm}
%         \boldsymbol{y}_A^0=\boldsymbol{y}_0(x_A^0), \hfill x_A^0=x_A^k=x_i, \ \ i=0, \dots,M,\\ \vspace{2mm}
%         x_D^k = x_A^{k+1} - \frac{\tau}{6} ( \omega_D^k + 4 \tilde{\omega}_I + \omega_A^{k+1} ),   \\ \vspace{2mm}
%         \boldsymbol{k}_1 = \boldsymbol{f}(t_k, x_D^k, \boldsymbol{y}_D^k),  \\ \vspace{2mm}
%         \boldsymbol{k}_2 = \boldsymbol{f}(t_k + \frac{1}{2} \tau, x_D^k + \frac{1}{2} \tau \omega_D^k, \boldsymbol{y}_D^k + \frac{1}{2} \tau \boldsymbol{k}_1), \\ \vspace{2mm}
%         \tilde{\boldsymbol{k}}_2 = \boldsymbol{f}(t_k + \frac{1}{4} \tau, x_D^k + \frac{1}{4} \tau \omega_D^k, \boldsymbol{y}_D^k + \frac{1}{4} \tau \boldsymbol{k}_1), \\ \vspace{2mm}
%         \boldsymbol{k}_3 = \boldsymbol{f}(t_{k+1}, x_A^{k+1}, \boldsymbol{y}_D^k - \tau \boldsymbol{k}_1 + 2 \tau \boldsymbol{k}_2),  \\ \vspace{2mm}
%         \tilde{\boldsymbol{k}}_3 = \boldsymbol{f}(t_{k}+\frac{\tau}{2}, x_D^k + \frac{1}{2} \tau \omega_D^k, \boldsymbol{y}_D^k - \frac{\tau}{2} \boldsymbol{k}_1 +  \tau \boldsymbol{k}_2),  \\ \vspace{2mm}
%         \tilde{\boldsymbol{y}}_I = \boldsymbol{y}_D^k + \frac{1}{6} \tau (\boldsymbol{k}_1 + 4 \tilde{\boldsymbol{k}}_2 + \tilde{\boldsymbol{k}}_3),  \\ \vspace{2mm}
%         \boldsymbol{y}_A^{k+1} = \boldsymbol{y}_D^k + \frac{1}{6} \tau (\boldsymbol{k}_1 + 4 \boldsymbol{k}_2 + \boldsymbol{k}_3), \hfill k=0,\dots,N-1.
%        \end{array} \right.
%\label{SLRKO3}
%\end{equation}
\begin{algorithm}[H]
\caption{Semi-Lagrangian Runge-Kutta Order-3}\label{SLRKO3}
\begin{algorithmic}[1]
\Procedure{SLRKO3: find all $\boldsymbol{y}_A^{k+1}$ given all $\boldsymbol{y}_A^k$}{}
\State set initial guesses for all $x_D^k$
\ForAll{$x_A^{k+1}$} \do

\For{$i=1$ to $n$} \Comment{Iterate to find $x_D^k$}
\State interpolate using $\boldsymbol{y}_A^k, x_A^k$ to find $\boldsymbol{y}_D^k$ \Comment{At least 3rd order interpolation}
\State $\omega_D^k = \omega(t_k, x_D^k, \boldsymbol{y}_D^k)$
\State $\boldsymbol{k}_1 = \boldsymbol{f}(t_k, x_D^k, \boldsymbol{y}_D^k)$
\State $\boldsymbol{k}_2 = \boldsymbol{f}(t_k + \frac{\tau}{2}, x_D^k + \frac{\tau}{2} \omega_D^k, \boldsymbol{y}_D^k + \frac{\tau}{2} \boldsymbol{k}_1)$
\State $\boldsymbol{k}_3 = \boldsymbol{f}(t_{k+1}, x_A^{k+1}, \boldsymbol{y}_D^k - \tau \boldsymbol{k}_1 + 2 \tau \boldsymbol{k}_2)$
\State $\boldsymbol{y}_A^{k+1} = \boldsymbol{y}_D^k + \frac{\tau}{6} (\boldsymbol{k}_1 + 4 \boldsymbol{k}_2 + \boldsymbol{k}_3 )$
\If{$i \leq n-1$}
\State $\omega_A^{k+1} = \omega (t_{k+1}, x_A^{k+1}, \boldsymbol{y}_A^{k+1})$
\State $\tilde{\boldsymbol{k}}_2 = \boldsymbol{f}(t_k + \frac{\tau}{4}, x_D^k + \frac{\tau}{4} \omega_D^k, \boldsymbol{y}_D^k + \frac{\tau}{4} \boldsymbol{k}_1)$
\State $\tilde{\boldsymbol{k}}_3 = \boldsymbol{f}(t_{k} + \frac{\tau}{2}, x_D^k + \frac{\tau}{2} \omega_D^k, \boldsymbol{y}_D^k - \frac{\tau}{2} \boldsymbol{k}_1 + \tau \tilde{\boldsymbol{k}}_2)$
\State $\tilde{\boldsymbol{y}}_I = \boldsymbol{y}_D^k + \frac{\tau}{12} (\boldsymbol{k}_1 + 4 \tilde{\boldsymbol{k}}_2 + \tilde{\boldsymbol{k}}_3 )$
\State $\tilde{\omega}_I = \omega(t_k + \frac{\tau}{2}, x_D^k + \frac{\tau}{2} \omega_D^k, \tilde{\boldsymbol{y}}_I) $
\State $x_D^k = x_A^{k+1} - \frac{\tau}{6} (\omega_D^k + 4 \tilde{\omega}_I + \omega_A^{k+1})$
\EndIf
\EndFor
\EndFor
\State \textbf{return} all $\boldsymbol{y}_A^{k+1}$
\EndProcedure
\end{algorithmic}
\end{algorithm}

Algorithm \ref{SLRKO3} uses an even higher order recalculation of the departure points. In the first and second order methods, we used left-side-rule and trapezoid rule approximations to numerically integrate along the characteristic lines. Algorithm (\ref{SLRKO3}), on the other hand, uses a Simpson's rule approximation:
$$\int_{t_k}^{t_{k+1}} \omega dt \approx \dfrac{\tau}{6} (\omega_D^{k} +4 \tilde{\omega}_I + \omega_A^{k+1}),$$
where, under case 3 assumptions for $\omega$, $$\omega_D^k=\omega(t_k,x_D^k,\boldsymbol{y}_D^k),$$ $$\omega_A^{k+1} = \omega(t_{k+1},x_A^{k+1},\boldsymbol{y}_A^{k+1}),$$ and $$\tilde{\omega}_I = \omega \Big(t_{k+1}+\frac{\tau}{2}, x_D^k + \frac{\tau}{2} \omega_D^k, \tilde{\boldsymbol{y}}_I \Big).$$ In order to determine $\omega_A^{k+1}$, we may need to evaluate $\boldsymbol{y}_A^{k+1}$ at all the arrival points; further, a half-way Runge-Kutta computation is required for $\tilde{\omega}_I$. This is dependent on which case of $\omega$ we have to deal with. We will consider what elements of the algorithm can be omitted by case for $\omega$:
\begin{enumerate}
\item If $\omega \in \mathbb{R}$, then calculation of the departure points reduces to simple subtraction. Further, neither iteration nor interpolation is required for computing $x_D^k$.
\item If $\omega = \omega(t,x)$, then $\omega_A^{k+1}$ is not dependent on $\boldsymbol{y}_A^{k+1}$. Also, $\tilde{\omega}_I$ is not dependent on $\tilde{\boldsymbol{y}}_I$. Implicit iteration will be required to compute $x_D^k$.
\item If $\omega = \omega(t,x,\boldsymbol{y})$, then $\omega_A^{k+1}$ is dependent on $\boldsymbol{y}_A^{k+1}$. We also need to find $\tilde{\boldsymbol{y}}_I$ in order to determine $\tilde{\omega}_I$. Both implicit iteration and interpolation are required for calculating departure points. Third-order or higher interpolation is used for the semi-Lagrangian Runge-Kutta order-3 method.
\end{enumerate}

The semi-Lagrangian Runge-Kutta order-3 method has a local truncation error of $O(\tau^3)$ given that we use at least third-order interpolations within the method \cite{nlipscomb}.

We will now introduced the semi-Lagrangian Runge-Kutta method of order-4 as seen in Algorithm \ref{SLRKO4}.
\begin{algorithm}[H]
\caption{Semi-Lagrangian Runge-Kutta Order-4}\label{SLRKO4}
\begin{algorithmic}[1]
\Procedure{SLRKO4: find all $\boldsymbol{y}_A^{k+1}$ given all $\boldsymbol{y}_A^k$}{}
\State set initial guesses for all $x_D^k$
\ForAll{$x_A^{k+1}$} \do

\For{$i=1$ to $n$} \Comment{Iterate to find $x_D^k$}
\State interpolate using $\boldsymbol{y}_A^k, x_A^k$ to find $\boldsymbol{y}_D^k$ \Comment{At least 4th order interpolation}
\State $\omega_D^k = \omega(t_k, x_D^k, \boldsymbol{y}_D^k)$
\State $\boldsymbol{k}_1 = \boldsymbol{f}(t_k, x_D^k, \boldsymbol{y}_D^k)$
\State $\boldsymbol{k}_2 = \boldsymbol{f}(t_k + \frac{\tau}{2} , x_D^k + \frac{\tau}{2}  \omega_D^k, \boldsymbol{y}_D^k + \frac{\tau}{2} \boldsymbol{k}_1)$
\State $\boldsymbol{k}_3 = \boldsymbol{f}(t_k+\frac{\tau}{2}, x_D^k + \frac{\tau}{2} \omega_D^k, \boldsymbol{y}_D^k + \frac{\tau}{2} \boldsymbol{k}_2)$
\State $\boldsymbol{k}_4 = \boldsymbol{f}(t_{k+1}, x_A^{k+1}, \boldsymbol{y}_D^k + \tau \boldsymbol{k}_3)$
\State $\boldsymbol{y}_A^{k+1} = \boldsymbol{y}_D^k + \frac{\tau}{6} (\boldsymbol{k}_1 + 2 \boldsymbol{k}_2 +2 \boldsymbol{k}_3 + \boldsymbol{k}_4 )$
\If{$i \leq n-1$}
\State $\omega_A^{k+1} = \omega (t_{k+1}, x_A^{k+1}, \boldsymbol{y}_A^{k+1})$
\State $\tilde{\boldsymbol{k}}_2 = \boldsymbol{f}(t_k + \frac{\tau}{4} , x_D^k + \frac{\tau}{4}  \omega_D^k, \boldsymbol{y}_D^k + \frac{\tau}{4} \boldsymbol{k}_1)$
\State $\tilde{\boldsymbol{k}}_3 = \boldsymbol{f}(t_k+\frac{\tau}{4}, x_D^k + \frac{\tau}{4} \omega_D^k, \boldsymbol{y}_D^k + \frac{\tau}{4} \tilde{\boldsymbol{k}}_2)$
\State $\tilde{\boldsymbol{k}}_4 = \boldsymbol{f}(t_k + \frac{\tau}{2}, x_D^k + \frac{\tau}{2} \omega_D^k, \boldsymbol{y}_D^k + \tau \tilde{\boldsymbol{k}}_3)$
\State $\tilde{\boldsymbol{y}}_I = \boldsymbol{y}_D^k + \frac{\tau}{12} (\boldsymbol{k}_1 + 2 \tilde{\boldsymbol{k}}_2 +2 \tilde{\boldsymbol{k}}_3 + \tilde{\boldsymbol{k}}_4 )$
\State $\tilde{\omega}_I = \omega(t_k + \frac{\tau}{2}, x_D^k + \frac{\tau}{2} \omega_D^k, \tilde{\boldsymbol{y}}_I) $
\State $x_D^k = x_A^{k+1} - \frac{\tau}{6} (\omega_D^k + 4 \tilde{\omega}_I + \omega_A^{k+1})$
\EndIf
\EndFor
\EndFor
\State \textbf{return} all $\boldsymbol{y}_A^{k+1}$
\EndProcedure
\end{algorithmic}
\end{algorithm}

Once again, we are using a Simpson's rule approximation; therefore, an intermediate evaluation of $\tilde{\omega}_I$ is often needed. Depending on the case of $\omega$, this may require a half step-size evaluation of the solution, $\tilde{\boldsymbol{y}}_I$. The algorithmic alterations by case of $\omega$ are similar to the semi-Lagrangian order-3 method in Algorithm \ref{SLRKO3} with the exception that we use at least fourth order interpolation.

The semi-Lagrangian Runge-Kutta order-4 method has a local truncation error of $O(\tau^4)$ given that we use at least fourth order interpolations \cite{nlipscomb}.

\section{Stability and Convergence}

An important consideration when developing numerical algorithms for solving differential equations is whether the algorithm is numerically stable, convergent, and whether an upper bound on the error can be determined. We will consider this for the initial value problem posed by System (\ref{mainsystem1}). Before we introduce our stability and convergence theorem, we need a few lemmas. The first two lemmas and their proofs can be found in Burden and Faires \cite{burdenfaires}.

\begin{lemma}
For all $t \geq -1$ and any positive $m$, we have $0 \leq (1+t)^m \leq e^{mt}$.
\label{basiclemma1}
\end{lemma}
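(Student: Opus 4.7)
The plan is to prove the bound by reducing it to the single-variable exponential inequality $1+t \le e^{t}$ valid for all real $t$, and then raising to the positive power $m$.

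First I would dispatch the lower bound. For $t \ge -1$ we have $1+t \ge 0$, so $(1+t)^m \ge 0$ for any positive exponent $m$ (this is immediate if we interpret $(1+t)^m$ via $\exp(m \log(1+t))$ for $t > -1$ and as $0$ when $t=-1$). That is essentially all there is to the left-hand inequality.

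For the upper bound, I would first establish the auxiliary inequality $1+t \le e^{t}$ for all $t \in \mathbb{R}$. The cleanest route is to set $\varphi(t) = e^{t} - (1+t)$, compute $\varphi'(t) = e^{t} - 1$, and observe that $\varphi'$ is negative on $(-\infty, 0)$ and positive on $(0, \infty)$, so $\varphi$ attains its global minimum at $t=0$ with $\varphi(0)=0$. Hence $\varphi(t) \ge 0$ everywhere, which is exactly $1+t \le e^{t}$. Because $t \ge -1$ also gives $1+t \ge 0$, both sides of this inequality are nonnegative, and the function $s \mapsto s^{m}$ is nondecreasing on $[0,\infty)$ for $m>0$. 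Raising both sides to the $m$-th power therefore preserves the inequality and yields $(1+t)^{m} \le (e^{t})^{m} = e^{mt}$, which is the desired bound.

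There is really no hard step here; the only subtlety worth flagging is that the sign condition $t \ge -1$ is needed twice, once to make $(1+t)^{m}$ well-defined and nonnegative, and once to justify monotonicity of the $m$-th power map on the relevant interval. Combining the two inequalities gives $0 \le (1+t)^{m} \le e^{mt}$, completing the proof.
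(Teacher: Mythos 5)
Your proof is correct. The paper does not prove this lemma itself but defers to Burden and Faires, whose argument is the same as yours in essence: establish the scalar inequality $1+t \le e^{t}$ (they do it via Taylor's theorem with remainder, you do it by minimizing $e^{t}-(1+t)$) and then raise both nonnegative sides to the power $m$, so your write-up matches the intended proof and handles the endpoint $t=-1$ and the monotonicity of $s \mapsto s^{m}$ appropriately.
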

\begin{lemma}
If $s$ and $t$ are positive real numbers and $\{ a_k \}_{k=0}^n$ is a sequence satisfying $a_0 \geq \frac{-t}{s}$ and
$a_{k+1} \leq (1+s)a_k + t$, for each $k=0, 1, \dots, n-1$, then
$$ a_{k+1} \leq e^{(k+1)s} (a_0 + \frac{t}{s}) - \frac{t}{s}.$$
\label{basiclemma2}
\end{lemma}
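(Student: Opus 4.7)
The plan is to unroll the recursive inequality $a_{k+1} \leq (1+s)a_k + t$ into an explicit closed-form upper bound on $a_{k+1}$ in terms of $a_0$, and then invoke Lemma \ref{basiclemma1} to replace the resulting polynomial factor with an exponential. First I would apply the hypothesis iteratively: bound $a_k$ by $(1+s)a_{k-1} + t$, then bound $a_{k-1}$ similarly, and so on down to $a_0$. After $k+1$ such substitutions this should yield
$$a_{k+1} \leq (1+s)^{k+1} a_0 + t \sum_{j=0}^{k}(1+s)^j.$$
The geometric sum evaluates cleanly to $\bigl[(1+s)^{k+1} - 1\bigr]/s$, and collecting terms gives
$$a_{k+1} \leq (1+s)^{k+1}\left(a_0 + \frac{t}{s}\right) - \frac{t}{s}.$$

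Next I would use the sign hypothesis $a_0 \geq -t/s$, which ensures that the coefficient $a_0 + t/s$ is nonnegative; consequently, replacing $(1+s)^{k+1}$ by any larger quantity preserves the direction of the inequality. Since $s > 0$, Lemma \ref{basiclemma1} with $m = k+1$ gives $(1+s)^{k+1} \leq e^{(k+1)s}$, so that
$$a_{k+1} \leq e^{(k+1)s}\left(a_0 + \frac{t}{s}\right) - \frac{t}{s},$$
which is precisely the claimed bound. If more rigor is wanted for the unrolling step, the same argument can be cast as an induction on $k$: the base case $k = 0$ is just the algebraic identity $(1+s)a_0 + t = (1+s)(a_0 + t/s) - t/s$ combined with Lemma \ref{basiclemma1}, and the inductive step combines the recurrence with the inductive hypothesis in the same way.

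I do not anticipate any real obstacle in this proof; it is essentially bookkeeping for a geometric series. The one point that must be handled carefully is the sign condition on $a_0 + t/s$, since without it the substitution of $e^{(k+1)s}$ for $(1+s)^{k+1}$ could flip the inequality. Everything else is routine algebra.
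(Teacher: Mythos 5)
Your proof is correct: the unrolling of the recurrence, the geometric-series evaluation, and the use of the sign condition $a_0 + t/s \geq 0$ before replacing $(1+s)^{k+1}$ by $e^{(k+1)s}$ via Lemma \ref{basiclemma1} are all exactly as they should be. The paper does not supply its own proof of this lemma but cites Burden and Faires, and the argument given there is essentially the same one you propose.
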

We will also need an interpolation lemma from Guo \cite{guo2}.
\begin{lemma}
Suppose $x_i = a+ih$ for $i=0, 1, \dots, M$ and $h=(b-a)/M$. Then, for each $x \in [a,b]$, if $p(x)$ and $q(x)$ are two piece-wise linear interpolations with $p(x_i) = u_i$ and $q(x_i) = v_i$ for $i=0, 1, \dots, M,$ then
$$| p(x) - q(x) | \leq \max_{0 \leq i \leq M} |u_i - v_i|.$$
\label{interplemma}
\end{lemma}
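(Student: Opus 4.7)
The plan is to reduce the estimate to a pointwise calculation on each subinterval $[x_i, x_{i+1}]$, using the fact that a piecewise linear interpolant on $[x_i, x_{i+1}]$ is a convex combination of its values at the two endpoints. Because $p$ and $q$ share the same breakpoints, the difference $p-q$ is again piecewise linear with the same nodes, so it suffices to control it on a single subinterval.

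Concretely, I would fix $x \in [a,b]$ and choose $i$ with $x \in [x_i, x_{i+1}]$ (using $i=M-1$ in the edge case $x=b$). Setting $\lambda = (x-x_i)/h \in [0,1]$, I would write
\begin{equation*}
p(x) = (1-\lambda)u_i + \lambda u_{i+1}, \qquad q(x) = (1-\lambda)v_i + \lambda v_{i+1},
\end{equation*}
which is just the standard linear interpolation formula on the uniform grid with spacing $h$. Subtracting gives
\begin{equation*}
p(x)-q(x) = (1-\lambda)(u_i - v_i) + \lambda(u_{i+1} - v_{i+1}).
\end{equation*}

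From here the triangle inequality together with $\lambda,1-\lambda \geq 0$ and $\lambda + (1-\lambda)=1$ yields
\begin{equation*}
|p(x)-q(x)| \leq (1-\lambda)|u_i-v_i| + \lambda|u_{i+1}-v_{i+1}| \leq \max_{0\leq j \leq M}|u_j-v_j|,
\end{equation*}
because any convex combination of nonnegative numbers is bounded by their maximum. Since $x \in [a,b]$ was arbitrary, the stated bound follows.

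There is no real obstacle here; the only points requiring mild care are choosing the subinterval consistently at the right endpoint $x=b$ and noting that the uniform spacing $h=(b-a)/M$ is used only to guarantee that the breakpoints $\{x_i\}$ of $p$ and $q$ coincide, which is what lets us compare them on the same subinterval with the same parameter $\lambda$.
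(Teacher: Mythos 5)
Your proof is correct and complete: writing $p$ and $q$ on each subinterval as the same convex combination of their nodal values and applying the triangle inequality is exactly the right argument, and your handling of the endpoint $x=b$ and of the role of the shared breakpoints is fine. Note that the paper itself offers no proof of this lemma (it simply cites Guo \cite{guo2}), so there is nothing to compare against; your argument is the standard one and would serve as a self-contained proof here.
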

Last, we need a theorem regarding the existence and uniqueness of solutions to initial value problems in the form of System (\ref{mainsystem1}).
\begin{theorem}
If $\boldsymbol{f}(t, x, \boldsymbol{y})$ from (\ref{mainsystem1}) is a continuous function of $t$ and $x$ and satisfies a Lipschitz condition in $\boldsymbol{y}$ for $(t, x) \in [0, T] \times [a, b]$ with $\| \boldsymbol{y} \|_{\infty} < \infty$, then there exists a unique differentiable solution, $\boldsymbol{y}(t, x)$ for the initial value problem in System (\ref{mainsystem1}).
\label{onedimexistunique}
\end{theorem}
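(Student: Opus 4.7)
The plan is to reduce the PDE system (\ref{mainsystem1}) to an ODE system along the characteristic curves and then invoke the classical Picard--Lindel\"of existence and uniqueness theorem. The setup is already done for us in the derivation of equation (\ref{systemomega}): if $x(t)$ satisfies $dx/dt = \omega$, then by the chain rule $d\boldsymbol{y}/dt = \boldsymbol{y}_t + \omega \boldsymbol{y}_x = \boldsymbol{f}(t, x, \boldsymbol{y})$. Thus along each characteristic curve, the PDE degenerates to an ODE initial value problem for $\boldsymbol{y}$, and the goal is to show these characteristic-wise ODE solutions fit together into a single differentiable function on $[0,T] \times [a,b]$.

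I would split the argument according to the three cases of $\omega$. In case 1, $\omega \in \mathbb{R}$, the characteristics are the parallel lines $x = x_0 + \omega t$, which foliate the strip $[0,T] \times [a,b]$ in a trivial way. Along each such line the system becomes $d\boldsymbol{y}/dt = \boldsymbol{f}(t, x_0 + \omega t, \boldsymbol{y})$ with $\boldsymbol{y}(0) = \boldsymbol{y}_0(x_0)$; the right-hand side is continuous in $t$ and Lipschitz in $\boldsymbol{y}$ (uniformly in $x_0$) by hypothesis, so Picard--Lindel\"of gives a unique solution on $[0,T]$. Varying $x_0 \in [a,b]$ (together, where necessary, with $x_0$-values outside $[a,b]$ that flow into $[a,b]$) produces a unique $\boldsymbol{y}(t,x)$ on the strip, and differentiability in $x$ follows from smooth dependence on initial data plus the assumed regularity of $\boldsymbol{y}_0$. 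Case 2, $\omega = \omega(t,x)$, is handled almost identically: solve $dx/dt = \omega(t,x)$, $x(0)=x_0$ first (uniqueness of the characteristics requires $\omega$ to be Lipschitz in $x$, which I would take as implicit in the statement), then run Picard--Lindel\"of on $\boldsymbol{y}$ along each characteristic $X(t;x_0)$.

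For case 3, where $\omega = \omega(t,x,\boldsymbol{y})$, the characteristics and the unknown are coupled, so I would combine them into the augmented system
\begin{equation*}
\frac{d}{dt}\begin{pmatrix} x \\ \boldsymbol{y} \end{pmatrix}
= \begin{pmatrix} \omega(t,x,\boldsymbol{y}) \\ \boldsymbol{f}(t,x,\boldsymbol{y}) \end{pmatrix},
\qquad
\begin{pmatrix} x(0) \\ \boldsymbol{y}(0) \end{pmatrix}
= \begin{pmatrix} x_0 \\ \boldsymbol{y}_0(x_0) \end{pmatrix},
\end{equation*}
and apply Picard--Lindel\"of to the vector $(x,\boldsymbol{y})$. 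Local existence extends to $[0,T]$ because $|\omega| < \infty$ and $\|\boldsymbol{y}\|_\infty < \infty$ rule out finite-time blow-up of the trajectory in the augmented phase space. Letting $x_0$ sweep through an interval large enough to cover $[a,b]$ at every time level yields a unique $\boldsymbol{y}(t,x)$ on $[0,T] \times [a,b]$, and differentiability follows from the smooth dependence theorem for ODEs.

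The main obstacle is case 3, and it is really a hypothesis-level obstacle: to run Picard--Lindel\"of on the augmented system one needs $\omega$ itself to be Lipschitz in $(x,\boldsymbol{y})$, a condition the stated theorem only phrases explicitly for $\boldsymbol{f}$. I would either fold this requirement into the statement (as is standard in the method-of-characteristics literature) or point out that it is implied by the continuity and boundedness assumptions on $\omega$ used throughout the paper. A secondary technical point is ensuring the family of characteristics covers the entire strip $[0,T]\times[a,b]$ without two of them crossing; this is automatic for cases 1 and 2 under the stated bounds and follows from uniqueness of the ODE in case 3.
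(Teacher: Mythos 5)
Your argument---reducing the system to ODEs along characteristics and invoking Picard--Lindel\"of, with the augmented $(x,\boldsymbol{y})$ system in the coupled case---is the standard route, and it is essentially the one the paper relies on: the paper gives no proof of this theorem at all, remarking only that one ``can be derived from most textbooks in partial differential equations.'' Your observation that the stated hypotheses are incomplete (one needs $\omega$ to be Lipschitz in $x$ and $\boldsymbol{y}$ for the characteristics to exist uniquely and foliate the strip without crossing, a condition the theorem phrases only for $\boldsymbol{f}$) is correct and is a genuine omission in the paper's statement, not a defect of your proof.
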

A proof of Theorem \ref{onedimexistunique} can be derived from most textbooks in partial differential equations.
We will now move on to our stability and convergence theorem which applies to the initial value problem in System (\ref{mainsystem1}). It is a generalisation of the theorem in Guo \cite{guo2}.
\begin{theorem}
Let the initial value problem in (\ref{systemomega}) be approximated by the one-step difference method
\begin{equation}
\left\{ \begin{array}{lr}
         \boldsymbol{y}_A^0=\boldsymbol{y}_0(x_A^0), & x_A^0=x_A^k=x_i, \ \text{for} \ i=0, \dots,M, \vspace{2mm} \\
         x_D^k = x_A^{k+1} - \omega_D^k \tau, & \vspace{2mm} \\
        \boldsymbol{y}_A^{k+1}=\boldsymbol{y}_D^k+\tau\boldsymbol{\phi}(t_k, x_D^k, \boldsymbol{y}_D^k, \tau, h) & \text{for each} \ k=0, \dots, N-1. \end{array} \right.
\label{OSDM}
\end{equation}
If there exists $\tau_0, \alpha > 0$ and $\boldsymbol{\phi}(t, x, \boldsymbol{y}, \tau, h)$ is continuous and satisfies a Lipschitz condition in the variable $\boldsymbol{y}$ with Lipschitz constant $L$ on
$$D = \{ (t, x, \boldsymbol{y}, \tau, h) : (t, x) \in [0,T]\times[a,b], \ \|\boldsymbol{y}\|_{\infty} < \infty, \ 0 \leq \tau \leq \tau_0, \ 0 \leq h \leq \alpha \tau_0 \}.$$\\
Then,
\begin{enumerate}
\item The one-step method in (\ref{OSDM}) is stable.
\item If $\boldsymbol{\phi}$ additionally satisfies a Lipschitz condition in all of its variables, then the one-step method in (\ref{OSDM}) is convergent if and only if it is consistent; that is $\boldsymbol{\phi}(t, x, \boldsymbol{y}, 0, 0) = \boldsymbol{f}(t,x,\boldsymbol{y})$.
\item If a function $\varepsilon(\tau)$ exists and, for each $k=0, 1, \dots, N$, the local truncation error $\boldsymbol{\varepsilon}_k(\tau)$ satisfies $\|\boldsymbol{\varepsilon}_k(\tau)\|_{\infty} \leq \varepsilon(\tau)$ for $\tau \in [0, \tau_0]$, then
    $$
    max_{x_A} \| \boldsymbol{y}(t_k, x_A^k) - \boldsymbol{y}_A^k \|_{\infty} \leq e^{TL} \max_{x_A} \| \boldsymbol{y}_0(x_A) - \boldsymbol{y}_A^0 \|_{\infty} + \frac{\varepsilon(\tau)}{L}(e^{TL}-1)
    $$
    for $k=1, 2, \dots, N$.
\end{enumerate}
\label{OSDMstabilityconvergencetheorem}
\end{theorem}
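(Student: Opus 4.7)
The plan is to concentrate on part 3 first, since parts 1 and 2 will fall out as specializations of the machinery developed there. The overall strategy mirrors the classical Burden--Faires proof of one-step ODE method convergence, but must be adapted to the semi-Lagrangian setting, where $y_D^k$ is reconstructed by interpolation from the grid values $\{y_A^k\}_{i=0}^M$ at the departure point $x_D^k$.

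Let $e_k = \max_{x_A}\|\boldsymbol{y}(t_k, x_A^k) - \boldsymbol{y}_A^k\|_\infty$. Writing $\boldsymbol{y}(t_k, x_D^k)$ for the exact solution evaluated at the (numerically determined) departure point, the definition of local truncation error gives
$$\boldsymbol{y}(t_{k+1}, x_A^{k+1}) = \boldsymbol{y}(t_k, x_D^k) + \tau\,\boldsymbol{\phi}(t_k, x_D^k, \boldsymbol{y}(t_k, x_D^k), \tau, h) + \tau\,\boldsymbol{\varepsilon}_k(\tau).$$
Subtracting the scheme in (\ref{OSDM}) and invoking the Lipschitz condition on $\boldsymbol{\phi}$ in the $\boldsymbol{y}$ slot yields
$$\|\boldsymbol{y}(t_{k+1}, x_A^{k+1}) - \boldsymbol{y}_A^{k+1}\|_\infty \leq (1+\tau L)\,\|\boldsymbol{y}(t_k, x_D^k) - \boldsymbol{y}_D^k\|_\infty + \tau\,\varepsilon(\tau).$$

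The crucial step is to bound $\|\boldsymbol{y}(t_k, x_D^k) - \boldsymbol{y}_D^k\|_\infty$ by $e_k$ plus a term that can be absorbed into $\varepsilon(\tau)$. Let $\bar{\boldsymbol{y}}_D^k$ denote the same interpolant as defines $\boldsymbol{y}_D^k$, but applied to the exact grid values $\{\boldsymbol{y}(t_k, x_i)\}$. Lemma \ref{interplemma} (componentwise) gives $\|\bar{\boldsymbol{y}}_D^k - \boldsymbol{y}_D^k\|_\infty \leq e_k$, while $\|\boldsymbol{y}(t_k, x_D^k) - \bar{\boldsymbol{y}}_D^k\|_\infty$ is a pure interpolation residual of the smooth exact solution, of order $h^p$, and is subsumed into $\varepsilon(\tau)$ via the constraint $h \leq \alpha\tau_0$. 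Taking maxima over arrival points produces the recursion
$$e_{k+1} \leq (1+\tau L)\,e_k + \tau\,\varepsilon(\tau).$$
Lemma \ref{basiclemma2} with $s = \tau L$ and $t = \tau\,\varepsilon(\tau)$ and then Lemma \ref{basiclemma1} replacing $(1+\tau L)^{k+1}$ by $e^{(k+1)\tau L} \leq e^{TL}$ give precisely the claimed bound.

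Stability (part 1) follows by running the same argument with zero truncation error and with an initial perturbation $\boldsymbol{\delta}_0$: the recursion becomes $e_{k+1} \leq (1+\tau L)e_k$, and Lemmas \ref{basiclemma1} and \ref{basiclemma2} give $e_k \leq e^{TL}\|\boldsymbol{\delta}_0\|_\infty$. Part 2 is a direct consequence of part 3 together with the definition of consistency: if $\boldsymbol{\phi}$ is Lipschitz in all its variables, the local truncation error tends to $\boldsymbol{\phi}(t, x, \boldsymbol{y}, 0, 0) - \boldsymbol{f}(t, x, \boldsymbol{y})$ as $\tau, h \to 0$, so the error bound in part 3 drives $e_k$ to zero if and only if this quantity vanishes identically. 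The main obstacle, and the main departure from the classical ODE argument, is the interpolation step: one must verify that the interpolation applied to the approximate grid values and to the exact grid values agrees up to $e_k$ (which is exactly Lemma \ref{interplemma}), and handle the implicit dependence of $x_D^k$ on $\boldsymbol{y}_D^k$ in case 3 of $\omega$, which is absorbed into the hypothesis that the composite map $\boldsymbol{\phi}$ is Lipschitz in $\boldsymbol{y}$ with constant $L$ on the entire domain $D$.
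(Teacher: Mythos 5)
Your treatment of parts 1 and 3 is essentially the paper's own argument: compare two grid sequences (or the grid sequence against the exact solution evaluated along the numerically determined characteristics), use the Lipschitz condition in $\boldsymbol{y}$ to get the factor $(1+\tau L)$, use Lemma \ref{interplemma} to pass from the departure-point difference back to the maximum over arrival points, and close the recursion with Lemmas \ref{basiclemma1} and \ref{basiclemma2}. In fact your part 3 is slightly more careful than the paper's, since you explicitly split $\boldsymbol{y}(t_k,x_D^k)-\boldsymbol{y}_D^k$ into the interpolant-versus-interpolant piece (controlled by Lemma \ref{interplemma}) and the pure interpolation residual of the smooth exact solution (absorbed into $\varepsilon(\tau)$); the paper leaves that second piece implicit. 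Those two parts are fine.

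The genuine gap is in the ``only if'' direction of part 2. You argue that the local truncation error tends to $\boldsymbol{\phi}(t,x,\boldsymbol{y},0,0)-\boldsymbol{f}(t,x,\boldsymbol{y})$ and that ``the error bound in part 3 drives $e_k$ to zero if and only if this quantity vanishes identically.'' But the estimate in part 3 is only an \emph{upper} bound: the failure of an upper bound to tend to zero does not show that the actual error fails to tend to zero, so this cannot establish that convergence forces consistency. The paper closes this direction differently: it sets $\boldsymbol{g}(t,x,\boldsymbol{y})=\boldsymbol{\phi}(t,x,\boldsymbol{y},0,0)$, invokes Theorem \ref{onedimexistunique} to obtain the unique solution $\boldsymbol{v}$ of the auxiliary problem $d\boldsymbol{v}/dt=\boldsymbol{g}(t,x,\boldsymbol{v})$, proves (via the four-term telescoping decomposition and the Lipschitz constants $L$ and $L_1$) that the scheme always converges to $\boldsymbol{v}$, and then observes that if the scheme also converges to the solution $\boldsymbol{y}$ of (\ref{systemomega}) for every choice of initial data and starting point, then $\boldsymbol{v}\equiv\boldsymbol{y}$ forces $\boldsymbol{g}=\boldsymbol{f}$. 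Your forward direction (consistency implies convergence) does go through along the lines you sketch, provided you justify uniformly in $k$ that $\varepsilon(\tau)=O(\tau+h)$ using the Lipschitz conditions of $\boldsymbol{\phi}$ in $\tau$ and $h$ together with the boundedness of $\partial\boldsymbol{v}/\partial t$ and $\partial\boldsymbol{v}/\partial x$; but the converse needs the auxiliary-problem argument, not the part 3 bound.
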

\begin{proof}{(1.)}
Let $\{\boldsymbol{u}_A^k\}_{k=1}^N$ and $\{\boldsymbol{v}_A^k\}_{k=1}^N$ each satisfy the difference equation in (\ref{OSDM}) with initial conditions $\boldsymbol{u}_0$ and $\boldsymbol{v}_0$ respectively.
Let
$$\boldsymbol{E}_A^k = \boldsymbol{u}_A^k - \boldsymbol{v}_A^k$$ for point $(t_k, x_A^k)$ and
$$\boldsymbol{E}_D^k = \boldsymbol{u}_D^k - \boldsymbol{v}_D^k$$ for point $(t_k, x_D^k)$.
Also, let
$$\boldsymbol{E}^k = \max_x \|\boldsymbol{E}_A^k\|_{\infty}$$ on the regular grid points (arrival points).
Now,
$$\boldsymbol{u}_A^{k+1} - \boldsymbol{v}_A^{k+1} = \boldsymbol{u}_D^k - \boldsymbol{v}_D^k + \tau[\boldsymbol{\phi}(t_k, x_D^k, \boldsymbol{u}_D^k, \tau, h) - \boldsymbol{\phi}(t_k, x_D^k, \boldsymbol{v}_D^k, \tau, h)]$$
from the difference formula. So,
$$
\begin{array}{lll}
\| \boldsymbol{u}_A^{k+1} - \boldsymbol{v}_A^{k+1} \|_{\infty} & = & \| \boldsymbol{u}_D^{k} - \boldsymbol{v}_D^{k} + \tau [ \boldsymbol{\phi}(t_k, x_D^k, \boldsymbol{u}_D^k, \tau, h) -  \boldsymbol{\phi}(t_k, x_D^k, \boldsymbol{v}_D^k, \tau, h)] \|_{\infty} \vspace{2mm} \\
& \leq & \| \boldsymbol{u}_D^{k} - \boldsymbol{v}_D^{k} \|_{\infty} + \tau L \| \boldsymbol{u}_D^{k} - \boldsymbol{v}_D^{k} \|_{\infty} \vspace{2mm} \\
& = & (1+\tau L) \| \boldsymbol{u}_D^{k} - \boldsymbol{v}_D^{k} \|_{\infty}
\end{array}
$$
from our Lipschitz condition of $\boldsymbol{\phi}$. Thus,
$$ \| \boldsymbol{E}_A^{k+1} \|_{\infty} \leq (1+\tau L) \| \boldsymbol{E}_D^k \|_{\infty}.$$
By Lemma \ref{interplemma}, we have
$$ \| \boldsymbol{E}_A^{k+1} \|_{\infty} \leq (1+\tau L) \|\boldsymbol{E}^k \|_{\infty}$$
and
$$\| \boldsymbol{E}^{k+1} \|_{\infty} \leq (1+\tau L) \| \boldsymbol{E}^k \|_{\infty}.$$
From Lemmas \ref{basiclemma1} and \ref{basiclemma2},
$$
\begin{array}{lll}
\| \boldsymbol{E}^k \|_{\infty} & \leq & (1+\tau L)^k \| \boldsymbol{E}^0 \|_{\infty} \vspace{2mm} \\
& \leq & e^{k \tau L} \| \boldsymbol{E}^0 \|_{\infty} \vspace{2mm} \\
& \leq & e^{T L} \| \boldsymbol{E}^0 \|_{\infty}
\end{array}
$$
where $k \leq \frac{T}{\tau} = N$. We notice that $e^{TL}$ is our stability constant. So, the method in (\ref{OSDM}) is stable.
\end{proof}
\begin{proof}{(2.)}
Let $\boldsymbol{\phi}(t, x, \boldsymbol{y}, 0, 0) = \boldsymbol{g}(t, x, \boldsymbol{y})$. $\boldsymbol{g}$ satisfies the conditions in Theorem \ref{onedimexistunique}; so,
\begin{equation}
\left\{ \begin{array}{ll}
         \frac{d \boldsymbol{v} }{dt} = \boldsymbol{g}(t, x, \boldsymbol{v}), & (t, x) \in [0,T]\times[a,b],\\
         & \boldsymbol{v}(0, x) = \boldsymbol{y}_0(x), \end{array} \right.
\label{OSDMproof2eq}
\end{equation}
has a unique, differentiable solution $\boldsymbol{v}(t, x)$. The numerical solution $\boldsymbol{z}$ satisfies
\begin{equation}
\left\{ \begin{array}{ll}
         \boldsymbol{z}_A^0 = \boldsymbol{y}_0(x_A^0), & x_A^0 = x_i \ \text{for} \ i=0, \dots, M, \vspace{2mm} \\
         x_D^k = x_A^{k+1} - \tau \omega_D^k, & \vspace{2mm} \\
         \boldsymbol{z}_A^{k+1} = \boldsymbol{z}_D^k - \tau \boldsymbol{\phi}(t_k, x_D^k, \boldsymbol{z}_D^k, \tau, h), & \text{for} \ k=0, \dots, N-1. \end{array} \right.
\label{OSDMconvergencesystem}
\end{equation}
By the Mean Value Theorem,
$$\boldsymbol{v}(t_{k+1}, x_A^{k+1}) = \boldsymbol{v}(t_k, x_D^k) + \tau \boldsymbol{g}(t_k + \xi \tau, x_D^k + \eta h, \boldsymbol{v}(t_k + \xi \tau, x_D^k + \eta h))$$
for some $\xi, \eta \in [0, 1]$. Let
$$\boldsymbol{E}_A^{k+1} = \boldsymbol{z}_A^{k+1} - \boldsymbol{v}(t_{k+1}, x_A^{k+1}),$$
$$E^k = \max_{x_A} \| \boldsymbol{E}_A^k \|_{\infty}$$
on the regular grid points (arrival points), and
$$\boldsymbol{E}_D^k = \boldsymbol{z}_D^k - \boldsymbol{v}(t_k, x_D^k).$$
Then,
$$
\begin{array}{lll}
\boldsymbol{E}_A^{k+1} &=& \boldsymbol{E}_D^k + \tau[\boldsymbol{\phi}(t_k, x_D^k, \boldsymbol{z}_D^k, \tau, h) - \boldsymbol{g}(t_k + \xi \tau, x_D^k + \eta h, \boldsymbol{v}(t_k + \xi \tau, x_D^k + \eta h))] \vspace{2mm} \\
& = & \boldsymbol{E}_D^k +\tau[\boldsymbol{\phi}(t_k, x_D^k, \boldsymbol{z}_D^k, \tau, h) - \boldsymbol{\phi}(t_k, x_D^k, \boldsymbol{v}(t_k, x_D^k), \tau, h) \vspace{2mm} \\
&& + \boldsymbol{\phi}(t_k, x_D^k, \boldsymbol{v}(t_k, x_D^k), \tau, h) - \boldsymbol{\phi}(t_k, x_D^k, \boldsymbol{v}(t_k, x_D^k), \tau, 0) \vspace{2mm} \\
&& + \boldsymbol{\phi}(t_k, x_D^k, \boldsymbol{v}(t_k, x_D^k), \tau, 0) - \boldsymbol{\phi}(t_k, x_D^k, \boldsymbol{v}(t_k, x_D^k), 0, 0) \vspace{2mm} \\
&& + \boldsymbol{\phi}(t_k, x_D^k, \boldsymbol{v}(t_k, x_D^k), 0, 0) - \boldsymbol{g}(t_k+\xi\tau, x_D^k+\eta h, \boldsymbol{v}(t_k + \xi\tau, x_D^k + \eta h))].
\end{array}
$$
By the assumption of $\boldsymbol{\phi}$ satisfying Lipschitz in all of its variables, with Lipschitz constant $L$ that is sufficiently large enough to satisfy the Lipschitz conditions in all the variables, we have
$$ \| \boldsymbol{\phi}(t_k, x_D^k, \boldsymbol{z}_D^k, \tau, h) - \boldsymbol{\phi}(t_k, x_D^k, \boldsymbol{v}(t_k, x_D^k, \tau, h)) \|_{\infty} \leq L \| \boldsymbol{z}_D^k - \boldsymbol{v}(t_k, x_D^k) \|_{\infty} \leq L \| \boldsymbol{E}_D^k \|_{\infty},$$
$$ \| \boldsymbol{\phi}(t_k, x_D^k, \boldsymbol{v}(t_k, x_D^k), \tau, h) - \boldsymbol{\phi}(t_k, x_D^k, \boldsymbol{v}(t_k, x_D^k), \tau, 0) \|_{\infty} \leq Lh,$$
$$ \| \boldsymbol{\phi}(t_k, x_D^k, \boldsymbol{v}(t_k, x_D^k), \tau, 0) - \boldsymbol{\phi}(t_k, x_D^k, \boldsymbol{v}(t_k, x_D^k), 0, 0) \|_{\infty} \leq L\tau,$$
and
$$
\begin{array}{ll}
&\| \boldsymbol{\phi}(t_k, x_D^k, \boldsymbol{v}(t_k, x_D^k), 0, 0) - \boldsymbol{g}(t_k + \xi\tau, x_D^k + \eta h, \boldsymbol{v}(t_k + \xi\tau, x_D^k + \eta h)) \|_{\infty} \vspace{2mm} \\
\leq & \| \boldsymbol{g}(t_k, x_D^k, \boldsymbol{v}(t_k, x_D^k)) - \boldsymbol{g}(t_k+\xi\tau, x_D^k, \boldsymbol{v}(t_k, x_D^k)) \|_{\infty} \vspace{2mm} \\
& + \| \boldsymbol{g}(t_k+\xi\tau, x_D^k, \boldsymbol{v}(t_k, x_D^k)) - \boldsymbol{g}(t_k+\xi\tau, x_D^k+\eta h, \boldsymbol{v}(t_k, x_D^k)) \|_{\infty} \vspace{2mm} \\
& + \| \boldsymbol{g}(t_k+\xi\tau, x_D^k+\eta h, \boldsymbol{v}(t_k, x_D^k)) - \boldsymbol{g}(t_k+\xi\tau, x_D^k+\eta h, \boldsymbol{v}(t_k+\xi\tau, x_D^k+\eta h)) \|_{\infty} \vspace{2mm} \\
\leq & L \xi \tau + L \eta h + L \| \boldsymbol{v}(t_k, x_D^k) - \boldsymbol{v}(t_k +\xi \tau, x_D^k + \eta h) \|_{\infty} \vspace{2mm} \\
\leq & L \xi \tau + L \eta h + L L_1 \xi \tau + L L_1 \eta h \vspace{2mm} \\
\leq & L \tau + L h + L L_1 \tau + L L_1 h \vspace{2mm} \\
\leq & L(1+L_1)(\tau + h),
\end{array}
$$
where $L_1$ is a positive constant such that
$$\Big\| \frac{ \partial \boldsymbol{v}}{\partial t} \Big\|_{\infty} \leq L_1, \  \ \ \text{and} \ \ \ \ \
\Big\| \frac{ \partial \boldsymbol{v}}{\partial x} \Big\|_{\infty} \leq L_1$$
for all $(t, x) \in [0, T]\times[a,b]$. So,
$$
\begin{array}{lll}
E^{k+1} & \leq & \| \boldsymbol{E}_D^k \|_{\infty} + L \tau \| \boldsymbol{E}_D^k \|_{\infty} + L \tau h + L \tau^2 + L(1+L_1)\tau(\tau + h) \vspace{2mm} \\
&  \leq & (1+ \tau L)E^k + L (2+L_1) \tau (\tau + h)
\end{array}
$$
and
$$
E^{k+1} \leq (1+\tau L) E^k + L(2+L_1)\tau(\tau + H).
$$
Using Lemma \ref{basiclemma2},
$$
\begin{array}{lll}
E^k & \leq & e^{k\tau L} E^0  + (2+L_1)(\tau + h)(e^{k \tau L} -1) \ \ \text{and} \vspace{2mm} \\
E^k  & \leq & e^{TL} E^0 + (2+L_1)(\tau + h)(e^{TL} -1).
\end{array}
$$
Clearly, the right hand side of the inequality goes to zero as $\tau, h \to 0$ with $E^0 = 0$. So, $\boldsymbol{z}^k \to \boldsymbol{v}^k$; that is, our numerical solution converges to the solution of System (\ref{systemomega}). Thus, given $\boldsymbol{\phi}(t, x, \boldsymbol{y}, 0, 0) = \boldsymbol{g}(t, x, \boldsymbol{y})$, the one-step difference method in Equation (\ref{OSDM}) converges.

Now we will assume convergence of the method in Equation (\ref{OSDM}). $\boldsymbol{v}$, our unique solution to System (\ref{OSDMproof2eq}), matches $\boldsymbol{y}$, our unique solution to System (\ref{systemomega}). Let $\boldsymbol{\phi}$ and $\boldsymbol{g}$ differ at some point; now consider the initial value problem starting at that point. Obviously, the two solutions are different, which leads to a contradiction. Thus, convergence requires $\boldsymbol{\phi}(t, x, \boldsymbol{y}, 0, 0) = \boldsymbol{g}(t, x, \boldsymbol{y})$, the condition of consistency.
\end{proof}
\begin{proof}{(3.)}
Let $\boldsymbol{E}_A^k = \boldsymbol{y}_A^k - \boldsymbol{y}(t_k, x_A^k)$ and $E^k = \max_{x_A} \| \boldsymbol{E}_A^k \|_{\infty}$.
From the definition of local truncation error,
$$
\boldsymbol{y}(t_{k+1}, x_A^{k+1})  =  \boldsymbol{y}(t_k, x_D^k) + \tau \boldsymbol{\phi}(t_k, x_D^k, \boldsymbol{y}(t_k, x_D^k), \tau, h) + \tau \boldsymbol{\varepsilon}_{k+1}(\tau).
$$
Subtracting this from the difference method yields
$$\begin{array}{lll}
\boldsymbol{E}_A^{k+1} &=& \boldsymbol{E}_D^k + \tau [\boldsymbol{\phi}(t_k, x_D^k, \boldsymbol{y}_D^k, \tau, h) - \boldsymbol{\phi}(t_k, x_D^k, \boldsymbol{y}(t_k, x_D^k), \tau, h)] \vspace{2mm} \\
&& - \tau \boldsymbol{\varepsilon}_{k+1}(\tau).
\end{array}$$
Then,
$$ \| \boldsymbol{E}_A^{k+1} \|_{\infty} = \| \boldsymbol{E}_D^k \|_{\infty} + \tau L \| \boldsymbol{y}_D^k - \boldsymbol{y}(t_k, x_D^k) \|_{\infty} + \tau \| \boldsymbol{\varepsilon}_{k+1}(\tau) \|_{\infty}
$$
and
$$ E^{k+1} \leq (1+\tau L) E^k + \tau \varepsilon(\tau).$$
From Lemma \ref{basiclemma2},
$$ \begin{array}{rll}
E^k &\leq& e^{k\tau L}(E^0 +\frac{\varepsilon(\tau)}{L}) - \frac{\varepsilon(\tau)}{L}, \vspace{2mm} \\
\Rightarrow E^k &\leq& e^{TL} E^0 + \frac{\varepsilon(\tau)}{L}(e^{TL} -1),
\end{array}$$
for $k=1, 2, \dots, N$.
\end{proof}
This theorem can be adapted for the different methods by determining the correct form of $\boldsymbol{\phi}(t_k, x_D^k, \boldsymbol{y}_D^k, \tau, h)$, higher order interpolations, and also considering the higher order approximations of the departure points.

\section{Higher Dimensions}
We have also developed semi-Lagrangian methods for solving initial value problems in the form of nonlinear advection in two dimensions seen in System (\ref{mainsystem1}). For example, the semi-Lagrangian Euler method in two dimensions is seen in Algorithm \ref{SLEM2D}.
%\begin{equation}\text{SLEM2D}:
%\left\{ \begin{array}{lr} \vspace{2mm}
%         \boldsymbol{u}_A^0 = \boldsymbol{u}(x_A^0,y_A^0), & x_A^0 = x_i, \ \ i=1,\dots,M_x, \\ \vspace{2mm}
%         & y_A^0 = y_j, \ \ j=1,\dots,M_y, \\ \vspace{2mm}
%         x_D^k = x_A^{k+1} - \tau u_D^k, & u_D^k \ \text{from interpolation}, \\ \vspace{2mm}
%         y_D^k = y_A^{k+1} - \tau v_D^k, & v_D^k \ \text{from interpolation}, \\ \vspace{2mm}
%         \boldsymbol{u}_A^{k+1} = \boldsymbol{u}_D^k + \tau \boldsymbol{f}(t_k, x_D^k, y_D^k, \boldsymbol{u}_D^k), & k=0, \dots, N-1,
%        \end{array} \right.
%\label{SLEM2D}
%\end{equation}
\begin{algorithm}[H]
\caption{Semi-Lagrangian Euler Method in Two Dimensions}\label{SLEM2D}
\begin{algorithmic}[1]
\Procedure{SLEM-2D: find all $\boldsymbol{u}_A^{k+1}$ given all $\boldsymbol{u}_A^k$}{}
\State set initial guesses for all $x_D^k, y_D^k$ pairs
\ForAll{$x_A^{k+1}, y_A^{k+1}$ pairs} \do

\For{$i=1$ to $n$} \Comment{Iterate to find $x_D^k$ and $y_D^k$}
\State interpolate using $\boldsymbol{u}_A^k, x_A^k, y_A^k$ to find $\boldsymbol{u}_D^k$ \Comment{At least 1st order interpolation}
\State $x_D^k = x_A^{k+1} - u_D^k$
\State $y_D^k = y_A^{k+1} - v_D^k$
\EndFor
\State interpolate using $\boldsymbol{u}_A^k, x_A^k, y_A^k$ to find $\boldsymbol{u}_D^k$ \Comment{At least 1st order interpolation}
\State $\boldsymbol{u}_A^{k+1} = \boldsymbol{u}_D^k + \tau \boldsymbol{f}(t_k, x_D^k, y_D^k, \boldsymbol{u}_D^k)$
\EndFor
\State \textbf{return} all $\boldsymbol{u}_A^{k+1}$
\EndProcedure
\end{algorithmic}
\end{algorithm}
$\boldsymbol{u}_A^k$ is the numerical approximation of $\boldsymbol{u}(t_k,x_A,y_A)=\boldsymbol{(} u(t_k,x_A,y_A),v(t_k,x_A,y_A) \boldsymbol{)}^{\intercal}$.
This method has a local truncation error of $O(\tau)$ for $h_x, h_y = O(\tau)$, $h_x = \frac{(b-a)}{M_x}$, $h_y = \frac{d-c}{M_y}$. We see that the method is similar to the one dimensional case. However, we have no longer have a generalised advection term and we must consider departure points with respect to an extra dimension. The 2nd--4th order methods are similarly constructed; further, similar results to Theorem \ref{OSDMstabilityconvergencetheorem} also hold for these algorithms as seen in Lipscomb \cite{nlipscomb}.

\section{Numerical Results}
We will now consider some numerical results. In order to do so, we require a measure of error.
\begin{definition}
Let $y_{\boldsymbol{x}}^t$ be the value of a numerical solution to a PDE at point $(t, \boldsymbol{x})$ on an Eulerian grid. Also, let $y(t, \boldsymbol{x})$ be the value of the exact solution to the same PDE at the same point. We define the residual at point $(t, \boldsymbol{x})$ as
$$
Res(y(t, \boldsymbol{x})) = |y(t, \boldsymbol{x}) - y_{\boldsymbol{x}}^t |.
$$
We further define the maximum residual at time $t$ as
$$
\max Res[y(t)] = \max_{\boldsymbol{x}} Res(y(t, \boldsymbol{x})).
$$
\end{definition}
Clearly, residuals capture the absolute error between the exact solution and numerical solution at specified points in time and space. We will use the maximum residual at time $t$ to create residual plots against time.

Our first initial value problem is in the form of general advection in one dimension (\ref{mainsystem1}). Specifically, we will work with $\omega(t,x,u,v) = u + v$, where $u = u(t,x)$ and $v = v(t,x)$ are the solutions to the system:
\begin{equation}
\begin{array}{lll}
u_t + u u_x + v u_x & = & 2 \pi (v^2 + uv -v) \vspace{1mm} \\
v_t + u v_x + v v_x & = & 2 \pi (u - u^2 - uv)
\end{array}
\label{problem1c}
\end{equation}
on the domain $(t,x) \in [0,1] \times [0,1]$ with initial conditions
$$
u(0, x) = \sin (2 \pi x), \ \ \ \ \ v(0, x) = \cos (2 \pi x).
$$
We can verify that the exact solutions are
$$
u(t, x) = \sin (2 \pi (x-t)), \ \ \ \ \ v(t, x) =  \cos (2 \pi (x-t)).
$$
In this case, the nonlinear system is specified strictly in terms of the unknown functions and their partial derivatives. We will use 5 iterations for calculating the departure points implicitly. However, we also note that interpolation is required for determining $\omega_D^k$ for each iteration.

The numerical solutions from the semi-Lagrangian Runge-Kutta Order-4 method can be seen in Figure \ref{RKO4-1c} using 50 spatial steps and 50 time steps; that is, $\tau = h = 0.02$. The solutions look very similar as they are simply translations of each other. However, they have different starting points and a full period is completed for both from $t=0$ to $t=1$.

\begin{figure}
\centering
\begin{tabular}{cc}
\includegraphics[width=70mm]{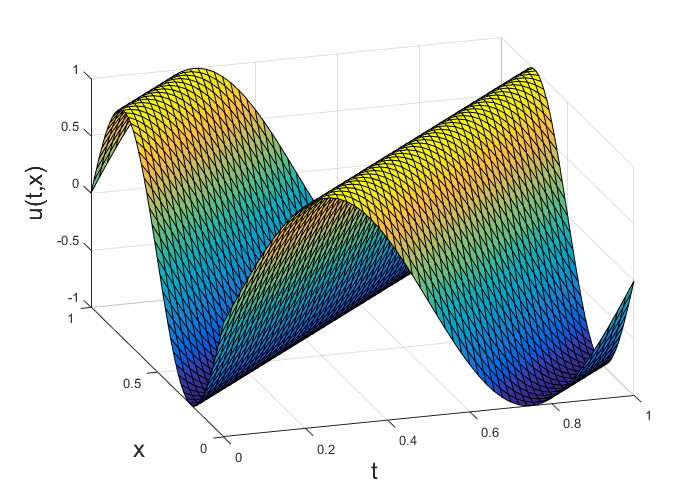} &
\includegraphics[width=70mm]{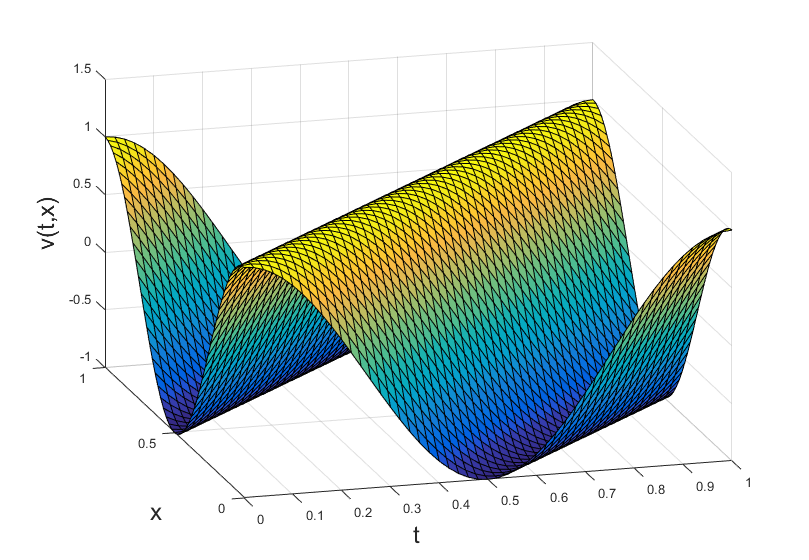}
\end{tabular}
\caption{Solutions to System (\ref{problem1c}) using the fourth order semi-Lagrangian method with $\tau = h = 0.02$.}
\label{RKO4-1c}
\end{figure}

Examining the semilog max residual plots in Figure \ref{maxresidual-1c}, it is clear we have achieved very strong results for the higher order methods. For all four methods, we chose $h = \tau = 0.005$.

%\begin{figure}
%\centering
%\begin{tabular}{cc}
%\includegraphics[width=80mm]{log10-U-1c-avg.png} &
%\includegraphics[width=80mm]{log10-V-1c-avg.png}
%\end{tabular}
%\caption{Mean residual semilog plot results from semi-Lagrangian method solutions to System (\ref{problem1c}) with $\tau = h = 0.01$.}
%\label{avgresidual-1c}
%\end{figure}

\begin{figure}
\centering
\begin{tabular}{cc}
\includegraphics[width=87mm]{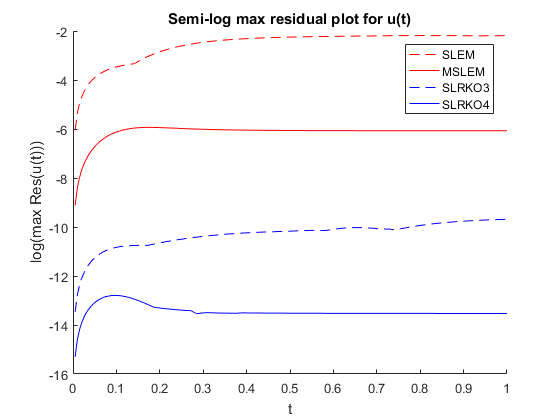} &
\includegraphics[width=87mm]{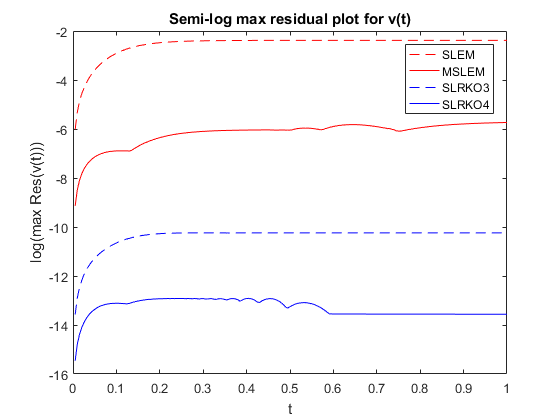}
\end{tabular}
\caption{Maximum residual semilog plot results from semi-Lagrangian method solutions to System (\ref{problem1c}) with $\tau = h = 0.005$.}
\label{maxresidual-1c}
\end{figure}

We will now consider a nonlinear advection in two dimensions (\ref{mainsystem2}) problem. Consider
\begin{equation}
\begin{array}{lll}
u_t + u u_x + v u_y & = & u + 2 \pi e^t (u \cos(2 \pi x) \sin(2 \pi y) + v \sin(2 \pi x) \cos(2 \pi y) \\
v_t + u v_x + v v_y & = & v - 2 \pi e^t (u \sin(2 \pi x) \cos(2 \pi y) + v \cos(2 \pi x) \sin(2 \pi y) \\
\end{array}
\label{problem2a}
\end{equation}
on the domain $(t,x,y) \in [0,1] \times [0,1] \times [0,1]$ with initial conditions
$$
u(0, x, y) = \sin(2 \pi x) \sin(2 \pi y), \ \ \ \ \ v(0, x, y) = \cos(2 \pi x) \cos(2 \pi y).
$$
We can verify that the exact solutions are
$$
u(t, x) = e^t \sin (2 \pi x) \sin (2 \pi y), \ \ \ \ \ v(t, x) = e^t \cos (2 \pi x) \cos (2 \pi y).
$$
The final time evolution of the numerical solutions from the fourth order method can be seen in Figure \ref{final-2a}. The step-sizes used were $\tau=h_x=h_y=0.02$. The initial conditions for both solutions are sine and cosine-based formations whose magnitude increases to what is seen in the picture.
\begin{figure}
\centering
\begin{tabular}{cc}
\includegraphics[width=87mm]{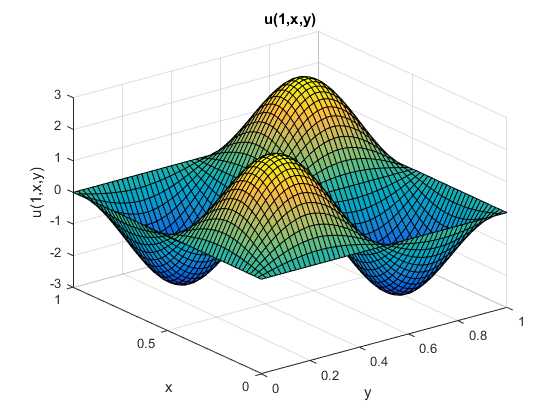} &
\includegraphics[width=87mm]{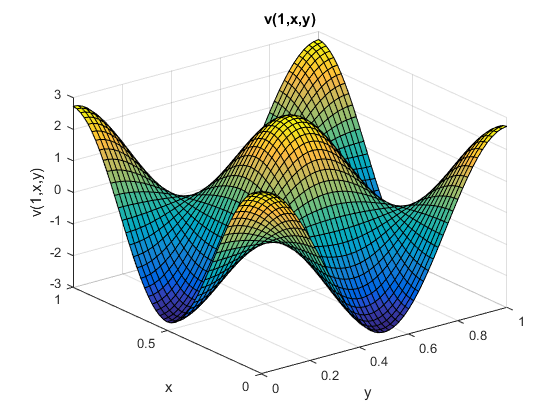}
\end{tabular}
\caption{The numerical solutions to $u$ and $v$ at $t=1$ for Problem (\ref{problem2a}) using a fourth order semi-Lagrangian method with $\tau=h_x=h_y=0.02$.}
\label{final-2a}
\end{figure}

Figure \ref{maxresidual-2a} presents the max semilog residual plots for comparative analysis. The spatial step-size was $h_x = h_y = 0.04$ with a time step-size of $\tau = 0.02$. Clearly, the fourth order method is superior, followed by the third order, second order, then first order.

%\begin{figure}
%\centering
%\begin{tabular}{cc}
%\includegraphics[width=80mm]{log10-U-2a-avg.png} &
%\includegraphics[width=80mm]{log10-V-2a-avg.png}
%\end{tabular}
%\caption{Mean residual semilog plot results from semi-Lagrangian method solutions to System (\ref{problem2a}) with $h_x = h_y = 0.04$ and $\tau = 0.02$.}
%\label{avgresidual-2a}
%\end{figure}

\begin{figure}
\centering
\begin{tabular}{cc}
\includegraphics[width=87mm]{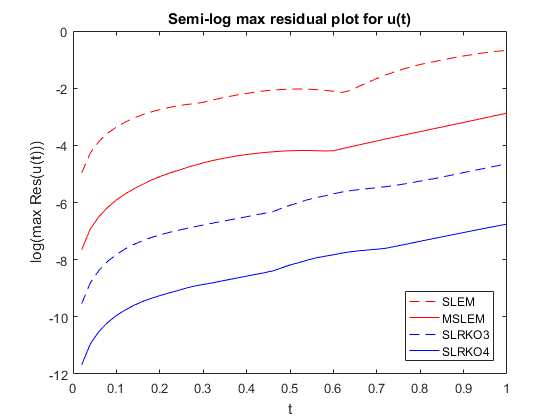} &
\includegraphics[width=87mm]{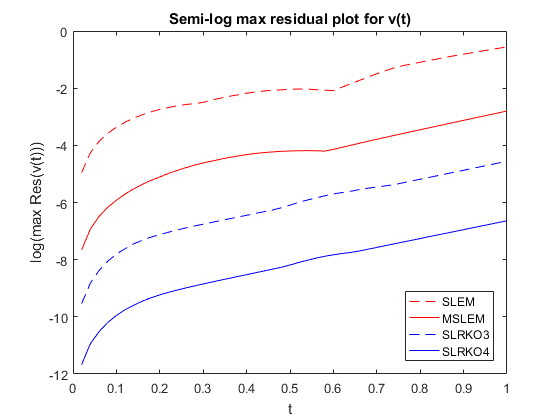}
\end{tabular}
\caption{Maximum residual semilog plot results from semi-Lagrangian method solutions to System (\ref{problem2a}) with $h_x = h_y = 0.04$ and $\tau = 0.02$.}
\label{maxresidual-2a}
\end{figure}

We will now provide some numerical confirmation of the order of these algorithms by examining the order of the absolute errors at $t=1$. The best way to examine the order of error is to produce a $\log$-$\log$ plot of the max residuals at $t=1$ as a function of the time step-size $\tau=\Delta t$. This will allow us to observe the slope to determine the order of the absolute error. Figure \ref{ConfofOrder} presents these plots for the first four algorithms in this paper; clearly the results match the order of each algorithm.

\begin{figure}
\centering
\begin{tabular}{cc}
\includegraphics[width=87mm]{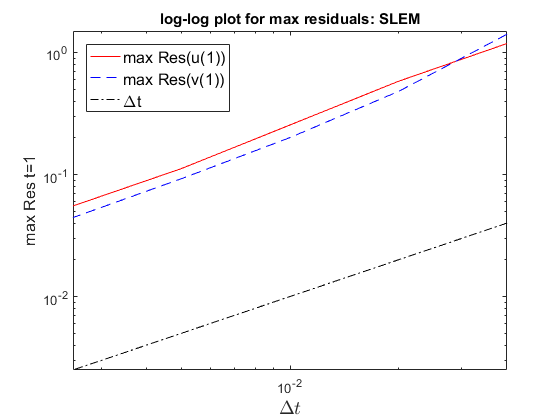} &
\includegraphics[width=87mm]{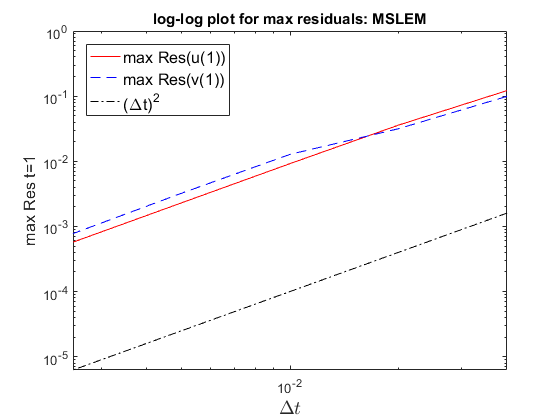} \\
\includegraphics[width=87mm]{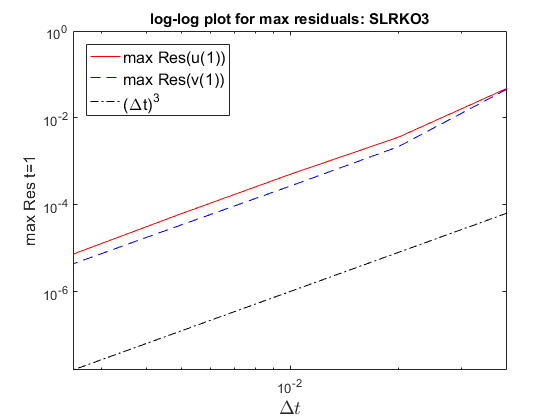} &
\includegraphics[width=87mm]{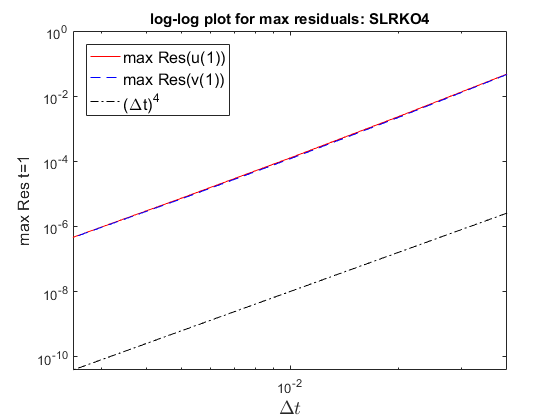}
\end{tabular}
\caption{$\log$-$\log$ plots relative to absolute error at $t=1$. Comparison lines have slopes to demonstrate exact order-$n$ change with respect to $\tau = \Delta t$; that is, the comparison lines have slopes of 1, 2, 3, and 4 respectfully.}
\label{ConfofOrder}
\end{figure}

\section{Conclusion and Future Development}

We have constructed several useful algorithms for numerically solving two general systems of time-dependent partial differential equations while examining their convergence, stability, and error. Using numerical simulations for several examples, we have established the effectiveness of the higher order numerical algorithms: the semi-Lagrangian Runge-Kutta order-3 and order-4 methods. The two general systems dealt with two different types of cases:
\begin{enumerate}
\item PDE System 1 is an initial value problem in one spatial dimension; however, the advection term $\omega$ can be generalised to $\omega = \omega(t,x,\boldsymbol{y})$ where $\boldsymbol{y}$ is the unknown solution to the initial value problem.
\item PDE System 2 is an initial value problem in two spatial dimensions. In this case, we have standard nonlinear advection terms: $u = \dfrac{dx}{dt}$ and $v = \dfrac{dy}{dt}$.
\end{enumerate}
By examining the construction of these algorithms, it is clear that these ideas can be built upon in order to solve PDE initial value problems in $n$ spatial dimensions as long as there are $n$ advection terms. These advection terms can be generalised as in PDE System 1; however, the performance of semi-Lagrangian methods in these more complicated cases will have to be tested.

\end{document}